\documentclass[12pt]{article}
\usepackage{amssymb,amsmath}
\input{amssym.def}\input{amssym}
\usepackage{graphicx, tikz}
\usepackage{hyperref,url}

\def\subjclass#1{\par\medskip
\noindent\textbf{Mathematics Subject Classification (2010):} #1}
\def\keywords#1{\par\medskip
\noindent\textbf{Keywords.} #1}

\newcommand{\N}{{\mathbb N}}

\newcommand{\R}{{\mathbb R}}

\newcommand{\Z}{{\mathbb Z}}

\newcommand\cA{{\mathcal A}}
\newcommand\cB{{\mathcal B}}

\newcommand\cL{{\mathcal L}}

\newcommand\cR{{\mathcal R}}

\def\p{\mathfrak{p}}

\newcommand\gS{{\mathfrak S}}

\newtheorem{theorem}{Theorem}[section]
\newtheorem{prop}{Proposition}[section]
\newtheorem{corollary}{Corollary}[section]
\newtheorem{lemma}{Lemma}[section]
\newtheorem{remark}{Remark}[section]
\newtheorem{example}{Example}[section]
\newtheorem{definition}{Definition}[section]
\newenvironment{proof}{\noindent {\bf Proof.}}{ \hfill $\Box$\\ }
\newenvironment{proofof}[1]{\noindent {\bf Proof of #1.}}{ \hfill $\Box$\\ }

\def\orb{\operatorname{orb}}
\def\supp{\operatorname{supp}}

\def\htop{h_{top}}

\title{Speedups of linearly recurrent subshifts}
\author{Henk Bruin
\thanks{Faculty of Mathematics, University of Vienna,  Oskar Morgensternplatz 1, 1090 Vienna, Austria; {\it henk.bruin@univie.ac.at}.
}}
\date{\today}

\begin{document}

\maketitle

\begin{abstract}
A speedup, like a time change in discrete time dynamics, is a way of moving faster through the orbits of a dynamical system.
Linearly recurrence is a stronger form of minimality for subshifts,
shared by e.g.\ all primitive substitution shifts and Sturmian shifts associated with rotation numbers of bounded type.
We prove that the homeomorphic speedup of a linearly recurrent two-sided subshift is again linearly recurrent.
\end{abstract}

\subjclass{Primary: 37B10, % Symbolic dynamics
Secondary: 37B05  %	Dynamical systems involving transformations and group actions with special properties (minimality, distality, proximality, expansivity, etc.)
}
	\keywords{speedup, linear recurrence, subshift, word-complexity.}

\section{Introduction}\label{sec:intro}

A speedup of a discrete time dynamical system
$(X,T)$ is another dynamical system that emerges by going
faster through the $T$-orbits. Namely, given a {\em jump-function} $p:X \to \N = \{1,2,3,\dots\}$,  the associated {\em speedup} is $S:X \to X$, $S(x) = T^{p(x)}(x)$.
Speedups can be viewed as a kind of discrete time analogue of time-changes
for continuous time flows, cf.\ \cite{KS87}.
The jump-function is a weaker form of the orbit cocycle of orbit equivalence, see e.g.\ \cite[page 23]{Bruin23}.
Speedups were considered from a measure-theoretic point of view in \cite{AOW85}
(its relation to cutting-and-stacking) and \cite{GPS95} (dimension groups for speedups).
More topological approaches were given in \cite{AAO18,Ash,AO24}.

Throughout this paper, we assume that $X$ is a Cantor set and $T: X \to X$ is continuous and invertible. We assume that the jump-function is continuous, injective (i.e., $T^{p(x)}(x) \neq T^{p(y)}(y)$ whenever $x \neq y$) and is such that $S$ is again invertible.
Since the space $X$ is compact, this implies in particular that $p$ is bounded, so that
\begin{equation}\label{eq:pmax}
p_{\max} := \max\{ p(x) : x \in X\}\ \text{ exists,}
\end{equation}
and that every $T$-orbit falls apart in a finite number of $S$-orbits.
The maximum $c$ of these numbers of $S$-orbits is called the {\em orbit number} and $c \leq p_{\max}$.
Ash~\cite[Proposition 4.1.2]{Ash} (and also  \cite[Proposition 2.2]{AAO18}) tells us that if $(X,T)$ is a minimal Cantor system with a continuous speedup $S = T^p$, then $p$ is continuous if and only if it is bounded.

A general question is which properties the speedup $(X,S)$ inherits from $(X,T)$.
Transitivity and minimality are not inherited in general. To illustrate this, assume that $\{X_1, X_2\}$ is a non-trivial clopen partition of the Cantor set $X$. If $p(x)$ is the first return time to the partition element that $x$ belongs to, then $S = T^p$ has $X_1$ and $X_2$ as invariant subsets. For subshifts, the extreme case of this is $p(x) = \min\{ n \geq 1 : x_n = x_0\}$.
 For the (one-sided) Fibonacci substitution shift ($0 \to 1, 1 \to 10$), with fixed point
 $$
 \rho = 0\ 1\ 0\ 01\ 010\ 01001\ 01001010\ 0100101001001 \ldots\ ,
$$
the speedup with the above jump function
uses the return words $A = 0$, $b=01$ of $0$, and $C = 10$, $D=100$ as alphabet.
The sequence $\rho$ falls apart in $c=2$ orbits
of the speedup:
$$
ABAABABAABAAB \ldots \quad \text{ and } \quad
CDCCDCDCCDCCD  \ldots
$$
That is, as the relevant theory \cite{Dur98, HZ98} (and also~\cite{AAO18, AO24}) predicts, the speedup renders two copies of the Fibonacci substitution shift, precisely  that are disjoint.

For this reason, we will assume throughout that our speedups are transitive.
A simple non-constant, non-first-return mechanism of the jump-function that retains transitivity, e.g.\ in the two-sided Fibonacci substitution shift, is
% $$
% p(x) = \begin{cases}
%         1 & x \in [01.110],\\
%         3 & x \in [0.1110],\\
%         2 & \text{otherwise,}
%        \end{cases}
%  $$
\\
\begin{figure}[h]
\begin{center}
\begin{tikzpicture}[scale=1]
\node at (-4,0)
{$p(x) = \begin{cases}
        1 & x \in [01.110],\\
        3 & x \in [0.1110],\\
        2 & \text{otherwise,}
       \end{cases}$};
\draw[->] (0.1,0.1) .. controls (1,0.4) .. (1.9,0.1); \node at (1,0) {\small $\bullet$}; \node at (1,-0.4) {\small $0$};
\draw[->] (1.1,0.1) .. controls (2,0.4) .. (2.9,0.1); \node at (2,0) {\small $\bullet$};  \node at (2,-0.4) {\small $1$};
\draw[->] (2.1,0.1) .. controls (3.5,0.6) .. (4.9,0.1); \node at (3,0) {\small $\bullet$};  \node at (3,-0.4) {\small $1$};
\draw[->] (3.1,0.1) .. controls (3.5,0.3) .. (3.9,0.1); \node at (4,0) {\small $\bullet$};  \node at (4,-0.4) {\small $1$};
\draw[->] (4.1,0.1) .. controls (5,0.4) .. (5.9,0.1); \node at (5,0) {\small $\bullet$};   \node at (5,-0.4) {\small $0$};
\draw[->] (5.1,0.1) .. controls (6,0.4) .. (6.9,0.1); \node at (6,0) {\small $\bullet$};  \node at (6,-0.4) {\small $0$};
\end{tikzpicture}
\end{center}
\end{figure}
\\
%\noindent
where the dot stands directly after the zero-th position in the words of the cylinder sets. The figure indicates how $S$-orbits go through $\sigma$-orbits.
The transitive speedup of a minimal system is again minimal, as we show in Proposition~\ref{prop:min}.

The speedup acts on the same space as the original map, but in case of a subshift $(X,\sigma)$ on a finite alphabet $\cA$,  to interpret the speedup again as
a subshift,
we use the collection of strings
\begin{equation}\label{eq:cB}
\cB = \{ x_0x_1 \dots x_{p(x)-1} : x \in X\}
\end{equation}
as alphabet; that is, each $x \in X$, is a concatenation of words in $\cB$
and $S$ is the left-shift on the level of these concatenation words.
Note that each $x \in X$ can be decomposed in at most $c$ different and completely disjoint ways, which we refer to as the different ways that $S$ {\em passes} through $x$.

It is easy to show that speedups of subshifts of finite type (SFTs) are again SFTs, and speedups of sofic shifts are sofic, see Proposition~\ref{prop:SFT}.
Ash et al.\ \cite{AAO18, AO24} showed, among other results, that minimal speedups of substitution shifts are substitution shifts, and also that the minimal speedup of odometers are odometers.
However, speedups of Toeplitz shifts need not be Toeplitz shifts, see \cite{AR25}.

\begin{definition}\label{def:LR}
 Let $(X, \sigma)$ be a subshift, where $\sigma$ denotes the left-shift,
 and $\cL(X)$ is the language of $X$.
We say that $(X,\sigma)$ is {\em linearly recurrent} if there is $L \in \N$ such that
every finite word $w\in \cL(X)$ reappears in every $x \in X$ with gap $\leq L |w|$ shifts.
\end{definition}

Linearly recurrent shifts are minimal, have a unique shift-invariant probability measure, $\mu$, and $\mu([u]) \geq 1/(L|u|)$, see \cite{Dur00}.
The aim of this paper is to prove the following theorem.

\begin{theorem}\label{thm:main}
Let $S = \sigma^p$ be the homeomorphic transitive speedup of a two-sided subshift $(X,\sigma)$.
Then $\sigma$ is linearly recurrent if and only if $S$ is linearly recurrent.
\end{theorem}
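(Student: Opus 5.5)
The direction ``$S$ linearly recurrent $\Rightarrow$ $\sigma$ linearly recurrent'' is the easy one, so I would do it first. Let $L'$ be the constant for $(X,S)$ over the alphabet $\cB$ from \eqref{eq:cB}. Take $w \in \cL(X)$ with $|w| = n$, and an occurrence of $w$ in some $y \in X$. Decompose $y$ into $\cB$-words along any $S$-passage. Then $w$ is covered by a $\cB$-word $W$ of length at most $n+2$ in the $S$-language, and every occurrence of $W$ in an $S$-orbit produces an occurrence of $w$. Now let $x \in X$ and follow the $S$-orbit of $x$. Consecutive $S$-steps move at most $p_{\max}$ positions in $x$. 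By linear recurrence of $S$, the word $W$ appears within $L'(n+2)$ $S$-steps of any $S$-position. Hence $w$ reappears in $x$ with gaps at most $(L'(n+2)+1)p_{\max} \le 4L'p_{\max}\, n$ $\sigma$-shifts. Here I use that $S$ is minimal, so that $W$ lies in the language of every $S$-orbit.

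For ``$\sigma$ linearly recurrent $\Rightarrow$ $S$ linearly recurrent'', first note that $S$ is minimal by Proposition~\ref{prop:min}. By continuity, $p$ is determined by a window $x_{-N}\dots x_N$. Take $W$ in the $S$-language, of length $n$ in $\cB$-letters. It spells a $\sigma$-word $w$ with $n \le |w| \le np_{\max}$. Let $w' = uwv$ with $|u| = |v| = N$, so that the cut points inside $w$ are forced locally once one cut point is known.

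The difficulty is that a reoccurrence of $w'$ in $x$ need not be passed by $S$ with the same cut pattern. To handle this, I would encode, at each occurrence of $w'$, the finite \emph{phase datum}: the at most $c$ ways the $S$-passages through $x$ cut the first $p_{\max}$ positions of $w$, together with the labelling of which passage is which. Moving from one occurrence of $w'$ to the next along a return word $r$ of $w'$ transforms this datum by a map $\pi_r$ in a finite group of order at most $c!\cdot p_{\max}^c$, and $\pi_r$ is determined by the word $r$ itself. So the relevant object is the derived sequence of $x$ with respect to $w'$, skewed by the cocycle $r \mapsto \pi_r$. Transitivity of $S$ (hence minimality) implies that every phase is reached, so this finite group extension of the derived system is minimal.

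The main obstacle is making the return time in this skew product bounded independently of $w$. I would use Durand's results for linearly recurrent subshifts~\cite{Dur00}. First, the return words to $w'$ number at most $K = K(L)$ and have length between $|w'|/L$ and $L|w'|$. Second, there are only finitely many derived sequences up to relabelling. Since the cocycle is a map from at most $K$ letters into a fixed finite group, there are only finitely many pairs (derived system, cocycle). Each minimal skew product among them has a finite bound $M$ on the number of derived letters needed to return to a given phase with $W$ read in the correct passage; let $M$ be the maximum over these finitely many pairs.

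It follows that along every $S$-orbit, $W$ reappears within $M L |w'| \le ML(np_{\max}+2N)$ $\sigma$-shifts. That is at most the same number of $S$-steps, because each $S$-step advances at least one position. This gives linear recurrence of $S$ with constant roughly $ML(p_{\max}+2N)$. If the finiteness-of-derived-sequences step is awkward to apply with the cocycle attached, my fallback would be to pass to the linearly recurrent product subshift recording $x$ together with its $c$ labelled cut patterns; it is a finite-to-one extension built with a sliding block code of radius $N$. I would then apply the argument to this product subshift directly.
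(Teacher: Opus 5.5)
\textbf{Verdict: there is a genuine gap in the direction ``$\sigma$ linearly recurrent $\Rightarrow$ $S$ linearly recurrent''.} Your easy direction ($S$ linearly recurrent $\Rightarrow$ $\sigma$ linearly recurrent) is fine. Your set-up for the hard direction also matches the paper: phase data at occurrences of $w'$, transformed by a finite cocycle along return words, is essentially the paper's permutation extension over the derived shift.

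\textbf{Where it fails.} The gap is the source of the uniform bound $M$. You assert that a linearly recurrent subshift has only finitely many derived sequences up to relabelling. This is false in general. By Durand's characterization, a uniformly recurrent sequence has finitely many derived sequences if and only if it is primitive substitutive. Take a Sturmian shift whose rotation number has bounded but not eventually periodic partial quotients. It is linearly recurrent (as the paper notes), but it contains no primitive substitutive sequence. So it has infinitely many derived sequences, and infinitely many derived shifts that remain distinct even up to relabelling. Your ``maximum over finitely many pairs (derived system, cocycle)'' is therefore a supremum over an infinite family, and nothing keeps it finite.

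For each fixed $w'$, minimality of $S$ only gives some finite $M(w')$. That yields recurrence of $W$, but with a gap you cannot bound linearly in $|W|$. Two further points are secondary, since you only need bounded time to reach phase $0$: ``every phase is reached'' does not imply minimality of the extension, and your phase transformations need not form a group.

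\textbf{The fallback does not close the gap.} The product subshift recording $x$ with its labelled cut patterns is exactly the paper's group extension. Calling it linearly recurrent assumes the whole content of Lemma~\ref{lem:lr}. Finite extensions of linearly recurrent shifts do not obviously inherit linear recurrence; this is the point of the example in Figure~\ref{fig:annoying_example}. Applying ``the argument'' to this product subshift runs into the same finiteness obstruction.

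\textbf{The missing idea.} Fix the extension once, and prove its linear recurrence by a measure argument rather than by finiteness of derived systems.
\begin{itemize}
\item Restrict to a transitive component. It has the form $\bigcup_x \{x\}\times s_x\gS_x$, with $x\mapsto \gS_x$ locally constant (Proposition~\ref{prop:cont}).
\item Induce on a cylinder $[u]$ where $s_x=e$ and $\gS_x=\gS'$. The resulting group extension $X_u\times\gS'$ is ergodic for $\mu_u\otimes\nu_{\gS'}$ (Lemma~\ref{lem:ergod}), hence uniquely ergodic and minimal (Proposition~\ref{prop:Furst}).
\item Consequently every cylinder $[v]\times\{s\}$ has measure at least $1/(L|v|\,\#\gS')$.
\item Boshernitzan's disjointness argument then applies. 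If $\tilde x$ contains $\tilde v$ only as a prefix, the sets $\sigma^{k}[\tilde x_1\dots\tilde x_k]$ for $|v|\le k\le|\tilde x|$ are pairwise disjoint. Summing their measures forces $|\tilde x|\le e^{L\#\gS'}|v|$, uniformly in $v$.
\end{itemize}
With this lemma in hand, your phase-datum argument gives a return to ($w'$, phase $0$) within $O(|w'|)$ shifts, and hence linear recurrence of $S$.
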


\noindent
Only for two-sided shifts, a speedup can be homeomorphic, but by passing from a one-sided to the two-sided shift with the same language, one can easily extend Theorem~\ref{thm:main} to one-sided shifts.

 The next section gives the relevant definitions, some related results, and estimates the word-complexity of speedups.  Section~\ref{sec:skew} investigates the structure of
non-abelian finite group-extensions and gives results on essential values and minimality needed for the main theorem.
Finally, in Section~\ref{sec:LR} we prove the main theorem.

\section{Preliminaries and miscellaneous results}\label{sec:prelim}

As mentioned in the introduction, speedups of transitive (Cantor) systems
need not be transitive, but we have the following:

\begin{prop}\label{prop:min}
 The continuous homeomorphic transitive speedup of a
 minimal system is minimal.
\end{prop}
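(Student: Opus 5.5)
Suppose $(X,T)$ is minimal and $S=T^p$ is a continuous, invertible, transitive speedup whose jump function $p$ is bounded by $p_{\max}$. The plan is to show that every nonempty closed $S$-invariant set is all of $X$. The argument has three steps: first pass from $S$-invariance to $T$-invariance, then use minimality of $T$, and finally use transitivity of $S$ to rule out proper subsets.

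\emph{Step 1 (from $S$-closed to $T$-closed).} Let $K\subseteq X$ be nonempty, closed, and $S(K)=K$. Since $S$ is a homeomorphism, this equality can be assumed for a minimal subset. Put $K' = \bigcup_{j=0}^{p_{\max}-1} T^j(K)$. This set is closed, being a finite union of closed sets. We claim $T(K')\subseteq K'$. Take $y=T^j(x)$ with $x\in K$ and $0\le j<p_{\max}$. If $j+1<p(x)$, then $T(y)=T^{j+1}(x)\in K'$ directly. If $j+1=p(x)$, then $T(y)=S(x)\in K\subseteq K'$. The remaining case is $j+1>p(x)$. Then write $T^{j+1}(x)=T^{j+1-p(x)}(S(x))$ and iterate along the $S$-orbit of $x$. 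Because $p\ge 1$, the exponent strictly decreases each time, so we end up with $T(y)=T^{i}(z)$ for some $z\in K$ and some $0\le i<p(z)\le p_{\max}$. Hence $T(y)\in K'$.

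So $K'$ is a nonempty closed forward $T$-invariant set. By minimality of $T$, $K'=X$.

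\emph{Step 2 (using transitivity of $S$).} Let $\Omega$ be the closure of the dense $S$-orbit, so $\Omega=X$. Suppose $K$ is an $S$-minimal set. By Step 1, $X$ is covered by the finitely many closed sets $T^j(K)$, $0\le j<p_{\max}$. Baire's theorem then gives some $T^j(K)$ with nonempty interior, and since $T$ is a homeomorphism, $K$ itself has nonempty interior $U$. The dense $S$-orbit of some point $x_0$ enters $U\subseteq K$. Because $K$ is closed and both forward and backward $S$-invariant ($S$ is invertible and $S(K)=K$), the whole $S$-orbit of $x_0$ lies in $K$. Its closure is $X$, so $K=X$.

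Thus the only $S$-minimal set is $X$. Since every nonempty closed $S$-invariant set contains a minimal one, every such set equals $X$, and $S$ is minimal.

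\emph{Main obstacle.} The delicate point is Step 1: one has to make sure $K'$ really is forward $T$-invariant using only forward $S$-invariance together with the bound $p\le p_{\max}$. This bound comes from the continuity of $p$ on the compact space $X$, as recorded in \eqref{eq:pmax}.

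A smaller subtlety is that transitivity must mean a dense (two-sided) $S$-orbit. This is what lets invariance of $K$ under both $S$ and $S^{-1}$, which holds because $S$ is invertible, finish Step 2.
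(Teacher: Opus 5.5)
Your proof is correct and follows essentially the same route as the paper: you saturate the $S$-invariant set by finitely many $T$-iterates to get a closed $T$-invariant set, which equals $X$ by minimality; Baire then gives the $S$-invariant set nonempty interior, and the dense $S$-orbit enters it. The differences are cosmetic: you use forward images $T^j(K)$ with the global bound $p_{\max}$ and argue directly rather than by contradiction, and your explicit check that $\bigcup_j T^j(K)$ is forward $T$-invariant fills in a step the paper only asserts.
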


\begin{proof}
Assume by contradiction that the speed-up, denoted $(X,S)$, is not minimal, so there is an $S$-invariant set $Y \subsetneq X$. Let $N = \max_{y \in Y} p(y)$, then $Y' = \bigcup_{k=0}^{N-1}\sigma^{-k}(Y)
=  \bigcup_{k=0}^{N-1}\sigma^{-k}(Y)$ is a shift-invariant set, and by minimality of $\sigma$, $Y' = X$.
Let $y \in X$ have a dense $S$-orbit.
If the interior $\mathring{Y} \neq \emptyset$, then there is $n \geq 0$ such that $S^n(y) \in \mathring{Y}$. In this case, the corresponding $\omega$-limit set $\omega_S(y) \subset Y \neq X$, contradicting that $\orb_S(y)$ is dense in $X$.
Alternatively, $\mathring{Y} = \emptyset$ but then $X = \bigcup_{k=0}^{N-1}\sigma^{-k}(Y)$
has empty interior too. This contradiction proves the lemma.
\end{proof}

\subsection{Subshifts of finite type and sofic shifts}
Let $\cA$ be a finite alphabet, and equip the
left-shift on the space $\cA^{\Z}$ of two-sided infinite sequences
with product topology.
Let $\sigma:\cA^\Z \to \cA^\Z$, $\sigma(x)_i = x_{i+1}$,
be the left-shift.  We say that $(X,\sigma)$
is a subshift if $X$ is
$\sigma$-invariant (in fact, $\sigma(X) = X$) closed non-empty subset of $\cA^{\Z}$.
Let $\cL(X) := \{ w \in \cA^n : n \in \N, w \text{ appears as a subword in some } x \in X \}$ be the
{\em language} of the subshift. Formally, also the empty word $\epsilon$ belongs to $\cL(X)$, but we don't need it, and including it only makes formulas more cumbersome.

A {\em subshift of finite type} (SFT) $(X,\sigma)$ on a finite alphabet $\cA$ is characterized a {\bf vertex-labeled} transition graph
$G= (V,E)$, where the vertex set $V = \cA$ and the directed edge set $E$ is unlabeled.
Infinite word paths in this graph correspond bijectively to the elements $x \in X$, via the labeling of the vertices.

A {\em sofic shift} $(X,\sigma)$ on a finite alphabet $\cA$ is characterized by an {\bf edged-labeled} transition graph
$G=(V,E)$. The vertex set is unlabeled but each (directed) $e \in E$ gets a label from $\cA$, where (contrary to SFTs) the same label can be assigned to  multiple edges. Infinite word paths in this graph correspond bijectively to the elements $x \in X$, via the labeling of the edges.

\begin{prop}\label{prop:SFT}
 The injective continuous speedup of a subshift of finite type is a subshift of finite type.
 The injective continuous speedup of a sofic shift is sofic.
\end{prop}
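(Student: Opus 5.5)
The plan is to realize the speedup $(X,S)$ as a subshift over the alphabet $\cB$ of \eqref{eq:cB}, and then to show that the finite-type or sofic structure of $(X,\sigma)$ carries over to this recoded system.

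\emph{Step 1: reduction to a recoding.} Since $p$ is continuous on the Cantor set $X$, it is locally constant. Hence there are $N \in \N$ and a finite set of words $u \in \cL(X)$ of length $2N+1$ such that $p(x)$ depends only on $x_{-N}\dots x_{N}$; we may take $N \geq p_{\max}$.

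Pass to the higher block presentation $X^{[2N+1]}$. This is again an SFT (respectively sofic) and is conjugate to $(X,\sigma)$. On it, $p$ depends only on the zeroth symbol. So without loss of generality $p(x) = P(x_0)$ for a function $P : \cA \to \{1,\dots,p_{\max}\}$.

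Now define the coding $\pi : X \to \cB^{\Z}$ by
\[
\pi(x)_n = x_{s_n} x_{s_n+1}\dots x_{s_{n+1}-1},
\]
where $s_0 = 0$, $s_{n+1} = s_n + p(\sigma^{s_n}x)$ for $n \ge 0$, and $s_n$ for $n<0$ is defined via $S^{-1}$. Then $\pi \circ S = \sigma_{\cB} \circ \pi$. Since $x$ is the concatenation of the blocks $\pi(x)_n$, the map $\pi$ is injective, and it is continuous. So $(X,S)$ is conjugate to the subshift $\pi(X) \subset \cB^{\Z}$.

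\emph{Step 2: the SFT case.} Let $G=(V,E)$ be the vertex-labeled graph of the (recoded) SFT. Build a graph $G'$ with vertex set $\cB$: each $b \in \cB$ is a path $b_0 \dots b_{k-1}$ in $G$ with $k = P(b_0)$. Put an edge $b \to b'$ whenever $b_{k-1} \to b'_0$ is an edge of $G$.

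Bi-infinite paths in $G'$ concatenate to bi-infinite paths in $G$, that is, to points $x \in X$ with the property that $x$ is parsed as $\pi(x)$ starting at coordinate $0$. Conversely, every $\pi(x)$ is a path in $G'$. So $\pi(X)$ is exactly the vertex shift of $G'$, hence an SFT.

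The one thing to check is that the parse produced by a $G'$-path agrees with the one produced by $p$. This holds because the block lengths are forced by $P(b_0)$, which is how $\cB$ was defined. Injectivity of $p$ is used only to make $S$ well defined as a bijection, so that the two-sided parse exists and $\pi$ is a conjugacy rather than merely a factor map.

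\emph{Step 3: the sofic case.} Here I expect the main obstacle. In an edge-labeled presentation, $p$ is a function of the label sequence, not of the graph path. Moreover, the same label word can be read along several paths.

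My approach is to avoid the problem by using that a sofic shift is a factor of an SFT $Y$ via a $1$-block map $\phi$ (the edge shift of its presentation). Pull $p$ back to $\tilde p = p \circ \phi$, which is continuous on $Y$. Set $\tilde S = \sigma^{\tilde p}$ on $Y$; then $\phi \circ \tilde S = S \circ \phi$.

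The map $\tilde S$ need not be injective. However, Step 2 only used injectivity for the two-sided parse, so I would redo Step 2 directly at the level of graphs. Build $G'$ whose edges are the paths of $G$ of length $\tilde p$, as determined by the local rule, and label each such edge by the word of $\cB$ it reads. Bi-infinite paths in $G'$ then project, via the labels, exactly onto the parsed sequences $\pi(x)$, $x \in X$. This shows that $\pi(X)$ is presented by a finite edge-labeled graph, hence is sofic.

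The remaining technical care is that the local rule for $p$ needs a window of $x$ around the current block. I would handle this by first passing to a higher block presentation of the labeled graph, in which each edge remembers the labels in a window of radius $N$. That makes $p$ a function of the current edge alone.
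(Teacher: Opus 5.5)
Your proposal is correct and follows essentially the paper's route: recode via a higher block presentation so that $p$ is read off a single symbol, then build a new graph whose edges (in your SFT step, vertices) are the paths of $G$ of length $p$; your sofic case spells out carefully what the paper dismisses as ``the same, only with edge-labeled transition graphs.'' One small correction is that the lifted map $\tilde S$ is in fact injective: if $\tilde S y = \tilde S y'$ then $S\phi(y) = S\phi(y')$, so $\phi(y)=\phi(y')$, hence $\tilde p(y)=\tilde p(y')$ and $y=y'$; you could therefore apply Step 2 directly to $(Y,\tilde S)$ and take the $1$-block factor sending each path to its label word, although your direct construction also works as written.
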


\begin{proof}
Let $N$ be minimal such that $p$ is constant on every centered $2N+1$-cylinder. Set $M = 2\max\{ p_{\max}, N\}+1$ and consider the alphabet $\tilde\cA = \cA^M$.
The SFT $(X,\sigma)$ over $\cA$ can be represented as an $M$-block-code shift over $\tilde\cA$, with its vertex-labeled transition graph $G= (V = \tilde\cA, E)$. Speeding up $\sigma$ results into a transition graph
$G' = (V' , E')$ where $V' = V = \tilde\cA$ and for each $v \in V$, the  edges $e \in E'$ outgoing from $v$ are the paths of length $p(v)$ starting at $v$.
That is, $(X,S = \sigma^p)$ has a vertex-labeled transition graph, so it is an SFT.

The proof for sofic shifts is the same, only with edge-labeled transition graphs.
\end{proof}

\subsection{Word-complexity of speedups}
In the above notation of subshifts, linear recurrence means that for every $w \in \cL(X)$ and $x \in X$ and $i \in \Z$, there is $k \in \{0, 1, \dots , L|w|\}$ such that $x_{i+k+1} \dots x_{i+k+|w|} = w$.
Linear recurrence is preserved by topological conjugacy, although the constant $L$ can change.
Linearly recurrent shifts are automatically minimal and in fact
uniquely ergodic.
%Let $\mu$ denote the corresponding shift-invariant probability measure.

Let $\p_\sigma(n) = \# \{ w \in \cL(X)  : |w| = n\}$
be the {\em word-complexity} of $(X,\sigma)$.
Linearly recurrent shifts have sublinear complexity: $\p_{\sigma}(n) \leq Ln$.
If $(X,\sigma)$ is linearly recurrent with constant $L$, then its word-coplexity $\p_{\sigma}(n) \leq Ln$ as well, but there are many subshifts of linear word-complexity that are not linearly recurrent,
e.g.\ all Sturmian subshifts associated to a rotation number of unbounded type. So the following proposition is, for linear recurrent shifts, a direct consequence of Theorem~\ref{thm:main}, but it works much more generally and has a much easier proof than Theorem~\ref{thm:main}.

\begin{prop}\label{prop:WC}
 Given a homeomorphic speedup $S=\sigma^p$ of a subshift $(X,\sigma)$, there is a constant $K$ such that the word-complexity of $S$ satisfies
 $\p_S(n) \leq K \, \p_\sigma(p_{\max} \, n)$ for $p_{\max} = \max\{ p(x) : x\in X\}$
 from \eqref{eq:pmax}.
\end{prop}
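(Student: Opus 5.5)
We bound the number of $S$-words of length $n$ by an injective-up-to-$K$ map into $\sigma$-words of length about $p_{\max} n$. An $S$-word of length $n$ is a word $b_1 b_2 \cdots b_n$ over the alphabet $\cB$ from \eqref{eq:cB}. Such a word occurs in some $x \in X$ as a block $b_i = x_{k_i}\dots x_{k_{i+1}-1}$, where $k_{i+1} = k_i + p(\sigma^{k_i}x)$. The plan is to read off the concatenation $b_1\cdots b_n$, a $\sigma$-word of length between $n$ and $p_{\max} n$, together with enough context to recover the decomposition into blocks.

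First, by continuity of $p$ on the Cantor set $X$, there is $N$ such that $p(y)$ depends only on the central block $y_{-N}\dots y_N$, as in the proof of Proposition~\ref{prop:SFT}. Given a $\sigma$-word $v = x_{k_1-N}\dots x_{k_1+p_{\max}n+N}$ of length $p_{\max}n + 2N+1$, the position $k_1$ (at offset $N$ in $v$) determines $p(\sigma^{k_1}x)$, hence $k_2$. The window around $k_2$ is still inside $v$, so $k_3$ is determined, and so on up to $k_{n+1} \le k_1 + p_{\max}n$. Thus the map sending $v$ to $b_1\cdots b_n$ is well defined and surjective onto the $S$-words of length $n$, and
$$\p_S(n) \le \p_\sigma(p_{\max} n + 2N+1).$$

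Next we absorb the additive $2N+1$. Since each word of length $m+j$ is an extension of a word of length $m$ by $j$ letters, $\p_\sigma(m+j) \le (\#\cA)^j\, \p_\sigma(m)$. Taking $j = 2N+1$ and $K = (\#\cA)^{2N+1}$ gives $\p_S(n)\le K\,\p_\sigma(p_{\max}n)$. One can alternatively use the window $x_{k_1-N}\dots x_{k_1+p_{\max}n-N-1}$ and treat the last few blocks crudely, but the extension bound is cleaner.

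The only subtle point is the identification of $\cL(X,S)$ as words over $\cB$. We need each $S$-word to genuinely arise from a position $k_1$ in some $x$, with the blocks $b_i$ determined by the $\sigma$-context, rather than being an abstract symbol whose letters are the same string with different $p$-values. Concretely, $\cB$ is a set of strings, so two different cylinder contexts giving the same string count once. This only helps, since the map $v \mapsto b_1\cdots b_n$ remains surjective. If instead the symbols of $S$ are taken to be the $2N+1$-contexts themselves (a conjugate recoding), the same argument works with the window enlarged by $N$ on each side, which changes only $K$. Homeomorphicity and injectivity are not really needed beyond guaranteeing that $S$ is a well-defined shift on these codings. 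I expect no serious obstacle; the main care is in the bookkeeping of the window length.
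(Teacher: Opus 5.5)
Your proof is correct and uses essentially the same idea as the paper: because $p$ is constant on centered $(2N+1)$-cylinders, a $\sigma$-word of length about $p_{\max}n$ determines the decomposition into $\cB$-blocks up to boundary effects, and these cost only a bounded factor. The difference is bookkeeping: you anchor the window at the start of the first block and include $N$ letters of context on each side, which gives an explicit surjection, and you then absorb the extra $2N+1$ letters via $\p_\sigma(m+j)\le(\#\cA)^j\,\p_\sigma(m)$, so $K=(\#\cA)^{2N+1}$ instead of the paper's $K=p_{\max}(\#\cA)^{2N}$, where the factor $p_{\max}$ accounts for the unknown phase.
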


\begin{proof}
 Since the jump-function is continuous and the space $X$ is compact, there is $N \in \N$ such that $p$ is constant on each $N$-cylinder.
 
 Let $Z = [z_1\dots z_n] \subset X$ be any $n$-cylinder.
 If  a sped-up orbit visits position $k \in [N+1, n-N]$ of this cylinder set,
 then the other positions in $[N+1, n-N]$ are fixed as well.
 We make no such uniqueness claim for positions in $[1,N]$ and $[n-N+1,n]$, but these are at most $\#\cA^{2N}$ patterns of positions.
 Hence, associated to $Z$, there are at most $p_{\max} \#\cA^{2N}$ $n/p_{\max}$-cylinders.
 This gives $\p_S(n) \leq K\, \p_\sigma(p_{\max} n)$ for $K = \#\cA^{2N} p_{\max}$
 where $p_{\max}$ is from \eqref{eq:pmax}.
\end{proof}

It follows that if $(X,\sigma)$ has linear, quadratic, cubic, etc., or even exponential word-complexity, so does the speedup.
For positive entropy shifts, this result just reproduces Abramov's formula
$\htop(Y,S) = (\int_X p \, d\mu)  \htop(X,\sigma)$ as in \cite{N69} and \cite[Proposition 2.9]{AAO18}.

\section{Finite non-abelian group extensions}\label{sec:skew}
Let $T:X \to X$ be a minimal homeomorphism on a Cantor set.
For a finite\footnote{Much of this section also holds for compact groups, but as we only apply it to a group of permutations in Section~\ref{sec:LR}, we restrict ourselves to finite groups.} group $G$, consider a skew-product
$$
F: X \times G \to X\times G, \qquad (x,g) \mapsto (T(x), g \cdot \phi(x) )
$$
for a continuous skew-function $\phi:X \to G$.
Write
\begin{equation}\label{eq:phin}
\phi^n(x) =
\begin{cases}
\phi(x) \cdot \phi(Tx) \cdots \phi(T^{n-1}(x) )  & \text{ if } n > 0,\\[1mm]
e \quad \text{(the identity of $G$)} & \text{ if } n = 0,\\[1mm]
\left( \phi(T^nx) \cdot \phi(T^{n+1}x) \cdots \phi(T^{-1}x) \right)^{-1}  & \text{ if } n < 0,
\end{cases}
\end{equation}
where the final ${}^{-1}$ in the third line refers to the inverse in the group.

Define for each $x \in X$ the set
\begin{equation}\label{eq:Gx}
G_x := \{ g \in G : \exists n_k \nearrow \infty \text{ such that }
\phi^{n_k}(x) \equiv g \text{ and } T^{n_k}(x) \to x\}.
\end{equation}
If $g, g' \in G_x$, say with corresponding sequence $(n_k)_{k \in \N}$ and
$(n'_l)_{l \in \N}$,
then, by continuity of $\phi^{n_k}$,
\begin{equation}\label{eq:G1}
\phi^{n_k+n'_l}(x) =  g' \cdot \phi^{n_k}( T^{n'_l}(x) ) \to g' \cdot g,
\quad \text{ as } l \to \infty.
\end{equation}
This holds for every $k$. Hence, by continuity of each $T^{n_k}$, there is a sequence $k_l \nearrow \infty$ such that
$T^{n_{k_l} + n'_l}(x) \to x$. This shows that $G_x$ is closed under composition.

\begin{remark}
One can just as well argue that, by continuity of $\phi^{n'_l}$,
$$
\phi^{n_k+n'_l}(x) = \phi^{n'_l+n_k}(x) =  g \cdot \phi^{n'_l}( T^{n_k}(x) ) \to g \cdot g',
\quad \text{ as } l \to \infty.
$$
However, this still doesn't imply that $G_x$ is abelian, because $(n_k+n'_l)_{l \in \N, k \in K}$
and $(n'_l+n_k)_{k \in \N, l \in L}$ might be entirely different sequences
for infinite sets $K, L \subset \N$, with only finite overlap.
Letting $k \to \infty$ and $l \to \infty$ at different speeds can still lead to different limit points in $G$.
It seems that if $T$ is equicontinuous, then $G_x$ is an abelian subgroup.
\end{remark}

Because $G$ is a finite group, every $g \in G$ has finite order, say order $d$, so $g^d = e$ is the identity. Applying \eqref{eq:G1} $d-2$ and $d-1$ times with $g' = g$, we obtain that $g^{d-1} = g^{-1} \in G_x$ and $g^d = e \in G_x$.
This proves for each $x \in X$ that $G_x$ is a subgroup of $G$.
%Using the same finiteness, and the fact that $g^{d-1} = g^{-1}$,
% we see that $G_x$ is in fact a subgroup of $G$.
%
% \begin{remark}
% Suppose that $g \in G_x$ with associated sequence $(n_k)_{k \in \N}$. Then
%  $$
% \phi^{n_k-n_l}(x) =
% \phi^{-n_l+n_k}(x) = \phi^{-n_l}( T^{n_k}(x) ) \cdot g \to g^{-1} \cdot g = e,
% \qquad \text{ as } k \to \infty.
% $$
% This holds for every $l$, so we can find a sequence $l_k \nearrow \infty$
% so that $T^{n_k-n_{l_k}}(x) \to x$.
%
% Then also, for every $l \in \N$ and the above sequence $(l_k)_{k \in \N}$,
%  $$
% \phi^{n_k-n_l-n_{l_k}}(x) =
% \phi^{-n_l+(n_k-n_{l_k}) }(x) = \phi^{-n_l}( T^{n_k-n_{l_k}}(x) ) \cdot e \to g^{-1},
% \qquad \text{ as } k \to \infty.
% $$
% This shows that finiteness of the group is not essential to prove that
% $G_x$ is a subgroup of $G$.
% \end{remark}

\begin{lemma}\label{lem:Gxs_conj}
Let $x,y \in X$ be two points in the same $T$-orbit.
Then $G_x$ and $G_y$ are conjugate subgroups of $G$,
 i.e., there is $g$ such that $G_y = g \cdot G_x \cdot g^{-1}$.
We can in fact take $g = \phi^n(y)$ where $n \in \Z$ is such that $T^n(y) = x$.
\end{lemma}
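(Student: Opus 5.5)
Fix $n \in \Z$ with $T^n(y) = x$ and put $h := \phi^n(y)$. The plan is to prove $h\, G_x\, h^{-1} \subset G_y$ directly from the cocycle identity. Exchanging the roles of $x$ and $y$ then gives the reverse inclusion, since $T^{-n}(x) = y$ and $\phi^{-n}(x) = (\phi^n(y))^{-1}$. Because $G$ is finite, one inclusion together with conjugation being a bijection already forces equality. The only tool needed is the cocycle identity
$$
\phi^{a+b}(z) = \phi^a(z)\cdot \phi^b(T^a z) \qquad (a,b \in \Z),
$$
which follows from \eqref{eq:phin} by a routine case check on the signs of $a,b$. Applied with $a = n$, $b = -n$ it yields $\phi^{-n}(x) = (\phi^n(y))^{-1} = h^{-1}$.

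Take $g \in G_x$, with a sequence $n_k \nearrow \infty$ such that $\phi^{n_k}(x) = g$ and $T^{n_k}(x) \to x$. Set $m_k := n + n_k$, so that $m_k \nearrow \infty$. By continuity of $T^{-n}$,
$$
T^{m_k}(y) = T^{-n}\bigl(T^{n_k}(x)\bigr) \to T^{-n}(x) = y.
$$
For the cocycle, split $m_k = n + n_k + (-n) + n$ and read the action along $y \mapsto x \mapsto T^{n_k}x \mapsto$ and so on. Concretely,
$$
\phi^{m_k}(y) = \phi^n(y)\cdot \phi^{n_k}(x) \cdot \phi^{-n}(T^{n_k}x)\cdot \phi^{n}(T^{n_k - n}x)\cdots
$$
This is getting messy, so I will split more cleanly as $m_k = n + n_k$:
$$
\phi^{m_k}(y) = \phi^n(y)\,\phi^{n_k}(x) = h g .
$$
That gives $hg$ rather than $hgh^{-1}$. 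The remedy is the alternative split $m_k = n_k + n$ taken along $y$'s own orbit:
$$
\phi^{m_k}(y) = \phi^{n}(y)\cdot\phi^{n_k}(x)\cdot\phi^{-n}(T^{n_k}x)\cdot \phi^{n}(T^{n_k-n} x)\ \text{(telescoping)}.
$$
It is simpler still to insert $\pm n$, i.e.\ to use $m_k' := n_k$ at the point $y$ and compute
$$
\phi^{n_k}(y) = \phi^{n}(y)\,\phi^{n_k}(x)\,\phi^{-n}(T^{n_k}x).
$$
This follows because $n + n_k + (-n) = n_k$ and $T^{n+n_k}(y) = T^{n_k}x$.

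The key limit is now $\phi^{-n}(T^{n_k}x) \to \phi^{-n}(x) = h^{-1}$, which holds because $\phi^{-n}$ is continuous (a finite product of continuous maps into the discrete group $G$) and $T^{n_k}x \to x$. Since $G$ is discrete, this value equals $h^{-1}$ for all large $k$. Moreover $T^{n_k}(y) = T^{-n}(T^{n_k}x) \to T^{-n}x = y$. Hence, along a tail of $(n_k)$, we have $\phi^{n_k}(y) = h g h^{-1}$ and $T^{n_k}y \to y$, so $hgh^{-1} \in G_y$.

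The main obstacle is the step just described: choosing the right time sequence at $y$, namely the same $n_k$ rather than $n+n_k$, so that the cocycle splits into the three factors $h$, $g$ and $h^{-1}$. Once that choice is made, the argument rests on continuity of the fixed-length cocycle $\phi^{-n}$ and on verifying the cocycle identity for negative exponents via \eqref{eq:phin}. The reverse inclusion follows by symmetry, as noted above, giving $G_y = \phi^n(y)\, G_x\, \phi^n(y)^{-1}$.
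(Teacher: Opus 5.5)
Your final argument is correct and is essentially the paper's proof: you use the same times $n_k$ at $y$, the split $\phi^{n_k}(y)=\phi^n(y)\,\phi^{n_k}(x)\,\phi^{-n}(T^{n_k}x)$ with continuity of $\phi^{-n}$ to get $hgh^{-1}$, and symmetry for the reverse inclusion (you even verify $T^{n_k}y\to y$, which the paper leaves implicit). In a write-up, delete the abandoned attempts with $m_k=n+n_k$, and drop the claim that one inclusion alone forces equality, since it is the symmetric inclusion (or the resulting equality of cardinalities) that closes the argument.
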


\begin{proof}
Take
$n \in \Z$ such that $T^{n_k}(y) \to x$ and
 $g = \phi^n(y)$.
 Now for $h \in G_x$ arbitrary, take $n'_l \nearrow \infty$ such that $T^{n'_l}(x) \to x$ and $\phi^{n'_l}(z) \equiv h$.
 Then
 \begin{eqnarray*}
  \phi^{-n+n'_l+n}(y) &=& \phi^n(y) \cdot \phi^{n'_l}(x)  \cdot \phi^{-n}(T^{n'_l}(x))
   = g \cdot h  \cdot \phi^{-n}(T^{n'_l}(x)) \\
  &\to_{l \to \infty}& g \cdot h  \cdot \phi^{-n}(x)
  = g \cdot h \cdot g^{-1},
\end{eqnarray*}
because $\phi^{-n}(x) = \phi^{-n}(T^ny) = (\phi^n(y))^{-1} = g^{-1}$ by \eqref{eq:phin}.
Since this holds for all $h \in G_x$, we have $G_y \supset g \cdot G_x \cdot  g^{-1}$, and in particular, $\# G_y \geq \# G_x$.
Reversing the roles of $x$ and $y$ gives the required equality.
\end{proof}

\begin{prop}\label{prop:cont}
Assume that $T: X \to X$ is minimal.
 The map $x \mapsto G_x$ is piecewise constant.
\end{prop}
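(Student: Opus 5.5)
The plan is to translate $G_x$ into a statement about orbit closures of the skew product $F$, and then use that $X\times G$ splits into finitely many clopen minimal sets.

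\textbf{Step 1 (reformulation).} Let $\pi_1$ denote the projection $X\times G\to X$. Since $F^n(x,e) = (T^n x, \phi^n(x))$ and $G$ is finite and discrete, $g\in G_x$ if and only if $(x,g)\in\omega_F(x,e)$. So
$$
G_x=\{g\in G : (x,g)\in \omega_F(x,e)\}.
$$
Proving that $x\mapsto G_x$ is piecewise constant therefore reduces to understanding how $\omega_F(x,e)$ depends on $x$.

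\textbf{Step 2 (structure of $F$).} Two properties of $F$ are needed.

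\emph{Commuting $G$-action.} Left multiplication $L_h(x,g)=(x,hg)$ commutes with $F$, since $F(x,hg)=(Tx,hg\phi(x))$. Hence $L_h$ maps minimal sets to minimal sets and orbit closures to orbit closures.

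\emph{Semisimplicity.} $F$ is an isometric (finite group) extension of the minimal map $T$, hence distal. Distal systems are semisimple by Ellis' theorem, so every point of $X\times G$ lies in an $F$-minimal set. I would cite this rather than reprove it. A direct argument is also available: for $(y,g)$ in $\omega_F(x,e)$, use a return sequence and the finiteness of $G$ to show that $(x,e)\in \omega_F(y,g)$.

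\emph{Consequences.} Each minimal set $M$ projects onto $X$ by minimality of $T$, and $M$ is a union of fibres $\{y\}\times(\text{subset of }G)$. Distinct minimal sets are disjoint. So $X\times G$ is a finite disjoint union of closed minimal sets; there are at most $\#G$ of them, because every $F$-orbit meets $X\times\{e\}$ after a left translation $L_h$. Being finitely many disjoint closed sets that cover $X\times G$, each minimal set is also open, i.e.\ clopen.

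\textbf{Step 3 (local constancy).} Let $M_x$ be the minimal set containing $(x,e)$. Then $\omega_F(x,e)=M_x$, so $G_x=\{g : (x,g)\in M_x\}$. Since every minimal set is clopen and $G$ is finite, there is a clopen neighbourhood $U\ni x$ with the following property: for each $g\in G$, the set $U\times\{g\}$ lies either entirely inside $M_x$ or entirely outside it.

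Now take $y\in U$. Then $(y,e)\in M_x$, so $\omega_F(y,e)=M_x$. Consequently
$$
G_y=\{g:(y,g)\in M_x\}=\{g:(x,g)\in M_x\}=G_x .
$$
Thus $x\mapsto G_x$ is locally constant. By compactness of $X$ this yields a finite clopen partition of $X$ on whose elements $G_x$ is constant, which is what "piecewise constant" means here.

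\textbf{Main obstacle.} The key step is Step 2's claim that every point, and in particular $(x,e)$, lies in a minimal set, so that $\omega_F(x,e)$ is itself minimal. Without this, $\omega_F(x,e)$ could depend on $x$ in a non-local way, and the clopen argument of Step 3 would not apply. I would first try to invoke distality of isometric extensions together with Ellis' theorem. If a self-contained argument is preferred, I would instead use the subgroup property of $G_x$ already established and Lemma~\ref{lem:Gxs_conj} to show that $\omega_F(y,g)\ni(x,e)$ whenever $(y,g)\in\omega_F(x,e)$.
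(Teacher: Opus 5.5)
Your route differs from the paper's and works. One of your two justifications for the key step is wrong, though: $F$ is \emph{not} distal in general. Distality passes to factors, and $T$ is a factor of $F$ via $\pi_1$. Every infinite subshift, in particular the derived shifts to which this is applied in Section~\ref{sec:LR}, has distinct asymptotic and hence proximal pairs, so neither $T$ nor $F$ is distal. Only the \emph{extension} $\pi_1$ is distal, so Ellis' theorem itself cannot be invoked; you would need its relative form (a distal extension of a minimal system is pointwise almost periodic).

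You don't need any of this. By your own Step 2, any minimal set $M$ projects onto $X$, so it contains some $(x,h)$, and then $L_{gh^{-1}}M$ is a minimal set through $(x,g)$. Your fallback argument is also valid, and it does not need Lemma~\ref{lem:Gxs_conj}:
\begin{itemize}
\item If $(y,g)\in\omega_F(x,e)$, pick $m_j\to\infty$ with $T^{m_j}y\to x$ and $g\phi^{m_j}(y)\equiv h$.
\item Then $(x,h)\in\omega_F(y,g)\subseteq\omega_F(x,e)$, so $h\in G_x$ and hence $h^{-1}\in G_x$.
\item Therefore $(x,e)=L_h(x,h^{-1})\in L_h\,\omega_F(x,e)=\omega_F(x,h)\subseteq\omega_F(y,g)$.
\end{itemize}
One smaller fix: the bound of at most $\#G$ minimal sets should come from the fact that the pairwise disjoint minimal sets all meet the finite fibre $\{x_0\}\times G$. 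Your remark about orbits meeting $X\times\{e\}$ does not give it.

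With that repaired, here is how the two proofs compare. The paper argues locally. It picks $x$ whose $H=G_x$ is minimal under inclusion, and builds a clopen $U_x$ on which all return-time cocycle values $g_V$ lie in $H$. From this it gets $G_y\subseteq H$, hence $G_y=H$, on $U_x$. It then transports this to all of $X$ through finitely many iterates $T^iU_x$, using Lemma~\ref{lem:Gxs_conj} and local constancy of $\phi^i$.

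Your argument is global. $X\times G$ splits into finitely many clopen minimal sets permuted by left translations, and $G_x$ is the fibre over $x$ of the minimal set $M_x$ through $(x,e)$; equivalently, $G_x$ is the stabiliser of $M_x$ under left translation. This buys several things at once:
\begin{itemize}
\item local constancy at every point, with no special base point and no conjugation lemma;
\item Corollary~\ref{cor:Gxs_conj}, since the stabiliser of $L_kM_x$ is $kG_xk^{-1}$;
\item the form $\bigcup_x\{x\}\times s_x\gS_x$ of the transitive components used in Lemma~\ref{lem:lr};
\item for every $x$, the property that $y,T^ny\in U$ forces $\phi^n(y)\in G_x$, which is what Lemma~\ref{lem:ergod} extracts from the paper's proof.
\end{itemize}
The paper's route stays at the level of return times and cocycle values, which matches the return-word viewpoint of Section~\ref{sec:LR}.
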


\begin{proof}
Let $x \in X$ be ``minimal'' in the sense that for no $y \in X$, $G_y$ is a proper subgroup of $H := G_x$.
For each $g \in G$, we can try to find a sequence
$(r^g_k)_{k \in \N}$ such that $\phi^{r^g_k}(x) \equiv g$ and $d(T^{r^g_{k+1}}(x),x) \leq \frac12 d(T^{r^g_k}(x),x)$ for all $k$.
If $g \notin H$, then such a sequence is necessarily finite, so there is a largest integer $N_g$ in this sequence.
(If $\phi^n(x) \neq g$ for all $n \geq 0$, then we set $N_g = 0$.)

Let $N = \max \{ N_g : g \notin H\}$ and let $U_x$ be a clopen neighborhood of $x$ (in fact, a cylinder set if $(X,T)$ is a subshift),
such that $T^n(x) \notin U_x$ for all $1 \leq n \leq N$.
It follows that if $T^n(x) \in U_x$, then
$\phi^n(x) \in H$.
Let $V \subset U_x$ be any maximal clopen set such that its return time $r_V$ to $U_x$ is constant and
$\phi^{r_V}|_V \equiv g_V$ is constant, as well.
By minimality, there is $n \in \N$ such that $T^n(x) \in V$, so $g := \phi^n(x) \in H$.
But then $T^{n+r_V}(x) \in U_x$, so that also
$\phi^{n+r_V}(x) = g \cdot g_V \in H$, so $g_V \in H$.

Since $g_V \in H$ for every return domain $V$ of $U_x$,
it follows $\phi^n(y) \in H$ whenever $y, T^n(y) \in U_x$, and therefore $G_y \subset G_x = H$.
But the choice of $x$ at the beginning of the proof means that
$G_y = G_x$ for all $y \in U_x$.
Therefore $y \in X_H$ for all $y \in U_x$,
so $x$ is an interior point of $X_H$, and because $x \in X_H$ was arbitrary, $X_H$ is open.

By minimality, there is $\ell \in \N$ such that $X = \bigcup_{i=0}^{\ell-1} U_x$,
and by Lemma~\ref{lem:Gxs_conj}, $G_z = g \cdot H \cdot g^{-1}$ for each $z \in T^i(U_x \cap A_i)$, $i = 0, \dots, \ell-1$, where $A_i$ is any set on which $\phi^i \equiv g$ is constant.
This proves that $x \mapsto G_x$ is piecewise constant.
\end{proof}

\begin{corollary}\label{cor:Gxs_conj}
Let $T:X \to X$ be a minimal Cantor system, and $x,y \in X$ arbitrary.
Then $G_x$ and $G_y$ are conjugate subgroups of $G$,
 i.e., there is $g$ such that $G_y = g \cdot G_y \cdot g^{-1}$.
\end{corollary}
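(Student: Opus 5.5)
We combine Lemma~\ref{lem:Gxs_conj} with Proposition~\ref{prop:cont}. The idea is that conjugacy classes of the $G_x$ are constant along $T$-orbits, and the map $x \mapsto G_x$ is locally constant. Minimality then lets us compare any two points through a single orbit.

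First we fix the conjugacy class of one point. Take $x \in X$ arbitrary and set $H = G_x$. For each subgroup $K \le G$ let $X_K = \{z \in X : G_z = K\}$, and write $[H]$ for the conjugacy class of $H$, i.e.\ the finite set of subgroups $gHg^{-1}$ with $g \in G$. Define
$$
Y = \bigcup_{K \in [H]} X_K .
$$
By Proposition~\ref{prop:cont}, each $X_K$ is open: its proof shows that every point has a clopen neighbourhood on which $z \mapsto G_z$ is constant. So $Y$ is open as a finite union of open sets.

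Next, $Y$ is also closed, because its complement is the finite union of the open sets $X_{K'}$ with $K' \notin [H]$. Moreover $Y$ is $T$-invariant: if $z \in Y$, then by Lemma~\ref{lem:Gxs_conj} applied to $z$ and $Tz$, the group $G_{Tz}$ is conjugate to $G_z$, hence to $H$. So $Y$ is a nonempty closed $T$-invariant set, and minimality of $T$ forces $Y = X$. Therefore $G_y \in [H]$ for every $y \in X$, which is the claim (with the evident correction that the conclusion reads $G_y = g\cdot G_x\cdot g^{-1}$).

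A more direct route is also available. By minimality the orbit of $x$ comes arbitrarily close to $y$, so some $T^n(x)$ lies in the clopen neighbourhood $U_y$ on which $G$ equals $G_y$. Then $G_{T^n x} = G_y$, and Lemma~\ref{lem:Gxs_conj} conjugates $G_{T^n x}$ to $G_x$.

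The main obstacle is making sure Proposition~\ref{prop:cont} really gives local constancy at every point, not only at the ``minimal'' points used in its proof. If needed, I would argue as follows. Take $x$ with $G_x$ of minimal cardinality; the proof of Proposition~\ref{prop:cont} shows $G_y = G_x$ on a clopen $U_x$. By minimality, the translates $T^i(U_x)$, $0 \le i < \ell$, cover $X$, and we refine them by sets $A_i$ on which $\phi^i$ is constant. Lemma~\ref{lem:Gxs_conj} then shows that every $G_z$ is a conjugate of $G_x$. This proves the corollary without needing local constancy at arbitrary points.
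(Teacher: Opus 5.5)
Your proposal is correct and rests on the same three ingredients as the paper's proof: Lemma~\ref{lem:Gxs_conj}, the local constancy of $x \mapsto G_x$ from Proposition~\ref{prop:cont}, and minimality. Your ``more direct route'' is exactly the paper's argument (the paper lets the orbit of $y$ approach $x$ and redoes the conjugation computation of Lemma~\ref{lem:Gxs_conj} inline), while the clopen invariant set $Y$ and your fallback through a point of minimal $\#G_x$ only repackage these facts, the fallback being precisely the final covering step in the proof of Proposition~\ref{prop:cont}; you are also right that the conclusion should read $G_y = g\cdot G_x\cdot g^{-1}$.
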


\begin{proof}
This follows immediately from Lemma~\ref{lem:Gxs_conj} and the continuity of
$x \mapsto G_x$.
In detail: by minimality, we can choose
$n_k \nearrow \infty$ such that $T^{n_k}(y) \to x$ and
since $G$ is finite, we can assume that $\phi^{n_k}(y) \equiv g$.
 Now for $h \in G_x$ arbitrary, take $k$ so large, i.e., $z := T^{n_k}(y)$ so close to $x$ that $h \in G_z$.
Next take $n'_l \nearrow \infty$ such that $T^{n'_l}(z) \to z$ and $\phi^{n'_l}(z) \equiv h$.
 Then
 \begin{eqnarray*}
  \phi^{-n_{k'}+n'_l+n_k}(y) &=& \phi^{n_k}(y) \cdot \phi^{n'_l}(z)  \cdot \phi^{-n_k}(T^{n'_l}(z))
   = g \cdot h  \cdot \phi^{-n_k}(T^{n'_l}(z)) \\
  &\to_{l \to \infty}& g \cdot h  \cdot \phi^{-n_k}(z)
  = g \cdot h \cdot g^{-1},
\end{eqnarray*}
because $\phi^{-n_k}(z) = \phi^{-n_k}(T^{n_k}y) = (\phi^{n_k}(y))^{-1} = g^{-1}$ by \eqref{eq:phin}.
Since this holds for all $h \in G_x$, we have $G_y \supset g \cdot G_x \cdot  g^{-1}$, and in particular, $\# G_y \geq \# G_x$.
Reversing the roles of $x$ and $y$ gives the required equality.
\end{proof}

The sets $G_x$ play the role of {\em local essential values}.
We say that $h \in G$ is an {\em essential value} if for every $A \subset X$
with $\mu(A) > 0$ there is $n \in \Z$ such that
$$
\mu\left( A \cap T^{-n}A \cap \{ x \in X : \phi^n(x) = h \}
\cap \{ x \in X : T^n(x) \neq x\} ) \right) > 0.
$$
For abelian groups $G$, $G_x$ coincides with the set of essential values, but for non-abelian groups, the essential values may not be more than the centralizer of $G$, or the intersection $\bigcap_{x \in X} G_x$, and thus $G$ is of less use.

The collection of essential values forms a subgroup of $G$, see \cite{S77}.

\begin{lemma}\label{lem:ergod}
Let $T: X \to X$ be minimal with an ergodic $T$-invariant measure $\mu$.
Let $F(x,g) = (T(x), g \cdot \phi(x))$ be a continuous skew-product, for a finite group $G$.
 Then there is $x \in X$, a neighbourhood $U$ of $x$ and an ergodic $F$-invariant measure $\rho$ such that
 $\rho|_{U \times H}$ is a multiple of $\mu|_U \otimes \nu_H$,
 where $H = G_x$ and $\nu_H$ is right Haar measure (i.e., counting measure) on $H$.
\end{lemma}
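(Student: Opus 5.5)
Choose $x$ as in the proof of Proposition~\ref{prop:cont}: no $G_y$ is a proper subgroup of $H := G_x$. That proof gives a clopen $U = U_x$ with two properties. First, $G_y = H$ for all $y \in U$. Second, every return $y, T^n(y) \in U$ has $\phi^n(y) \in H$. Consider the induced map $T_U$ (first return to $U$) and the induced cocycle $\psi = \phi^{r_U}: U \to H$. Then $F_U := F$ induced on $U \times G$ acts as
$$
(y,g) \mapsto (T_U y, g\cdot \psi(y)),
$$
and it leaves each set $U \times gH$ invariant. So it suffices to produce an ergodic $F_U$-invariant measure on $U \times H$ of the form $\mu_U \otimes \nu_H$, with $\mu_U = \mu|_U/\mu(U)$. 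We then spread it out to an $F$-invariant measure $\rho$ by the usual tower construction: $\rho = \sum_{k} F^k_*(\text{restriction to } \{r_U > k\})$. Because $U\times H$ is a section, $\rho|_{U\times H}$ is a multiple of the induced measure, and ergodicity passes between the induced and the full system.

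Invariance of $\mu_U \otimes \nu_H$ under $F_U$ is immediate. Indeed, $T_U$ preserves $\mu_U$, and right multiplication by $\psi(y)\in H$ preserves counting measure on $H$ fibrewise. The real content is \textbf{ergodicity}. Since $\mu_U$ is $T_U$-ergodic, standard skew-product theory (as in \cite{S77}) applies: the ergodic components of $\mu_U\otimes\nu_H$ correspond to a closed subgroup $K \le H$, the Mackey range. The component through $(y,e)$ is $\mu_U \otimes \nu_{K}$, up to a measurable coboundary $y\mapsto b(y)$. So we need to show that the essential values of $\psi$ relative to $\mu_U$ generate all of $H$. If we cannot get ergodicity of the full product, we instead take $\rho$ to be an ergodic component. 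We then replace $H$ by the subgroup $K$ where necessary. That requires showing $K = H$ anyway, so this is the crux.

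To show $K=H$, I would use the topological information $G_y = H$ for every $y\in U$ together with unique ergodicity/minimality. Take $h \in H$ and $y\in U$. Then there are $n_k$ with $T^{n_k}y \to y$ and $\phi^{n_k}(y) = h$. By continuity of $\phi$ (it is locally constant), the same $n_k$ gives $\phi^{n_k} = h$ and $T^{n_k}(V)\cap V \ne\emptyset$ on a whole cylinder $V \ni y$. Since cylinders have positive $\mu$-measure under minimality, this yields positive-measure sets $A\cap T^{-n}A\cap\{\phi^n = h\}$ for $A$ small clopen. That makes $h$ a \emph{local} essential value on clopen sets.

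The \textbf{main obstacle} is passing from clopen sets to arbitrary measurable $A$, since essential values are defined via all positive-measure sets. There is a further issue: for a non-abelian $H$ the measurable coboundary $b$ may conjugate things. I would handle this by a density-point argument. Given measurable $A\subset U$ of positive measure, pick a small cylinder $V$ in which $A$ has density close to $1$; an essential value of $\psi$ can then be tested on such $V$. I would also use the freedom of shrinking $U$ and choosing $x$ (the lemma only asserts existence of some $x$ and $U$) to absorb the conjugation by $b$. In that step I would invoke Corollary~\ref{cor:Gxs_conj} to keep all $G_y$ within conjugates of $H$.
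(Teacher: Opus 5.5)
Your set-up is the same as the paper's. You choose $x$ with $G_x$ minimal and induce on $U=U_x$, so that $\psi=\phi^{r_U}$ takes values in $H$. You then observe that $\mu_U\otimes\nu_H$ is $F_U$-invariant and spread it out by the Kac tower. You also locate the crux correctly. Ergodicity of $\mu_U\otimes\nu_H$ is equivalent to every $h\in H$ being a \emph{measurable} essential value of $\psi$. For a finite group no transfer function enters this criterion, so non-commutativity is not the obstacle.

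The paper settles this step in one line, invoking ``by definition of essential value'' for $h=h_1^{-1}h_2\in H$. That is, it takes for granted that elements of the topologically defined $G_x$ are essential values. Your proposal does not fill the step either.

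Your clopen step is fine: for a cylinder $V\subset U$ you get $n$ with $E_n:=V\cap T^{-n}V\cap\{\phi^n=h\}$ non-empty and open, hence $\mu(E_n)>0$. The density-point step fails, however. If $\mu(V\setminus A)<\varepsilon\mu(V)$, all you can conclude is $\mu(A\cap T^{-n}A\cap\{\phi^n=h\})\ge\mu(E_n)-2\mu(V\setminus A)$. This is useless unless $\mu(E_n)\ge\delta\mu(V)$ for some $n$, with $\delta>0$ independent of $V$. Neither minimality nor $G_y\equiv H$ on $U$ gives such a uniform proportion. The points of $V$ that return with cocycle value $h$ may form a vanishing fraction of $V$, lying in $V\setminus A$. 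Shrinking $U$ or invoking Corollary~\ref{cor:Gxs_conj} cannot help, since the $G_y$ are topological objects and do not see a merely measurable transfer function.

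In fact this step cannot be proved under the stated hypotheses, because the lemma itself fails in general. Consider Veech's $\Z/2$-extensions (Trans.\ Amer.\ Math.\ Soc.\ 1969). There $T$ is the Cantor model of an irrational rotation by $\alpha$ with the orbits of $0$ and $\beta$ doubled, which is minimal and uniquely ergodic, and $\phi=1_{[0,\beta)}$ mod $2$ is continuous. Veech showed that for suitable $\alpha,\beta$ the extension $F$ is minimal but not uniquely ergodic.

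Minimality of $F$ forces $G_x=\Z/2$ for every $x$. By Proposition~\ref{prop:Furst}, the failure of unique ergodicity forces $\mu\otimes\nu$ to be non-ergodic, so $\phi$ is a $\mu$-measurable coboundary. Consequently every ergodic $F$-invariant $\rho$, which must project to $\mu$, is carried by the graph of a measurable function $X\to\Z/2$. Hence no $x$, $U$, $\rho$ as in the lemma exist.

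Any valid argument must therefore use quantitative structure of the base beyond minimality. In the paper's application that structure is linear recurrence, which would have to produce a uniform bound of the type $\mu(E_n)\ge\delta\mu(V)$. Neither your sketch nor the paper's appeal to the definition of essential value supplies it.
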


\begin{proof}
The proof of Proposition~\ref{prop:cont} implies that there is $x \in X$ and a neighbourhood $U = U_x$ of $x$ such that if
$T^n(y) \in U$ for some $y \in U$, then $\phi^n(y) \in H$.
Let $T_U = T^{\tau}$ be the first return map to $U$, where $\tau(y) = \min\{ n \geq 1 : T^n(y) \in U\}$ is the first return time of $y \in U$ to $U$.
Then $\mu_U := \frac{1}{\mu(U)} \mu|_U$ is $T_U$-invariant and ergodic because $\mu$ is  $T$-invariant
and ergodic.
Let
$$
F_U: U \times H \to U \times H, \qquad
(y\, ,\, g) \mapsto (T_U(y) \ , \ g \cdot \phi^{\tau(y)}(y) )
$$
be the corresponding skew-product; it preserves $\mu_U \otimes \nu_H$.

To show ergodicity, assume that $Y \subset U \times H$ is invariant under the skew-product and $\mu_U \otimes \nu_H(Y) > 0$.
Since $\mu_U$ is ergodic, the projection of $Y$ onto $U$ has full $\mu_U$-measure.

Assume now by contradiction that $\mu_U \otimes \nu_H(Y^c) > 0$. Both $Y$ and $Y^c = (U \times H) \setminus Y$ are invariant, so their projections are $T_U$-invariant sets of $U$ of positive measure, and by ergodicity of $\mu$, these projections are $U \bmod \mu_U$.
Therefore there is a set $A \subset U$ and $h_1, h_2 \in H$ such that
\begin{equation}\label{eq:h1h2}
\mu \otimes \nu_H(Y \cap (A \times \{ h_1\})) > 0 \quad \text{ and } \quad \mu \otimes \nu_H(Y \cap (A \times \{ h_2\})) = 0.
\end{equation}
(We could write $T$ here instead of $T_U$ because $T^n(y) \in U$ for $y \in U$ is equivalent to $T^n(y) = T^{n'}_U(y)$ for some $|n'| \leq |n|$.)
Take $h = h_1^{-1} \cdot h_2 \in H$, so by definition of essential value, there is $n \in \Z$ such that
$\mu(B) > 0$ for $B := A \cap T^{-n}(A) \cap \{ x \in X : \phi^n(x) = h, x \neq T^n(x)\}$.
But then
$$
\mu \otimes \nu_H( \{ (T^n(x), h_1 \cdot \phi^n(x)) : x \in B\} ) > 0,
$$
but
$$
\{ (T^n(x)\, ,\, h_1 \cdot \phi^n(x)) : x \in B\} \subset A \times \{ h_2\},
$$
contradicting the choice of $A$ and $h_2$
in \eqref{eq:h1h2}. Hence $\mu_U \otimes \nu_H$ is
ergodic.

Finally, using Kac' formula, we spread this mass over the whole
skew-product space
to obtain the required $F$-invariant measure $\rho$:
$$
\rho(\, \cdot\, ) := \frac{1}{\Lambda} \sum_{n \geq 1}
\mu_U \otimes \nu_H \left( F^{-n}(\, \cdot\, ) \cap \{ (y,g) \in U \times H : n < \tau(y) \} \right),
$$
for normalising constant $\Lambda := \sum_{n \geq 1}
n \mu_U(\{ y : \tau(y) = n\})$.
This formula gives $\rho|_{U \times H} = \frac{1}{\Lambda}  \mu_U \otimes \nu_H$, and the ergodicity of $\rho$ follows from the ergodicity of
$\mu_U \otimes \nu_H$.
\end{proof}

The following result is in essence due to Furstenberg \cite[Section 2 and Theorem 4.1]{Fu61}, who proved it for toral automorphisms.
The idea was used to prove the unique ergodicity (and minimality) of arbitrary
group extensions over uniquely ergodic bases.
\\

\begin{prop}\label{prop:Furst}
 Let $(X,T,\mu)$ be uniquely ergodic
 and $G$ a compact group with (right) Haar measure $\nu_G$.
 Let the group extension $S:Y \to Y$ defined on $Y := X \times G$ as
 $F(x,g) = (T(x), g\phi(x))$ for some $\phi:X \to G$.
 If $F$ is ergodic w.r.t.\ $\nu = \mu \otimes \nu_G$, then
 $F$ is uniquely ergodic.

 If in addition, $T$ is minimal, then $S$ is minimal, as well.
\end{prop}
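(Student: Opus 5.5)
The plan follows Furstenberg's argument: use unique ergodicity of $(X,T,\mu)$ together with ergodicity of $\nu=\mu\otimes\nu_G$ to show that $\nu$ is the only $F$-invariant probability measure, and then deduce minimality from full support.

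\textbf{Step 1: $\nu$ is $F$-invariant.} This is immediate from Fubini. Since $\nu_G$ is right invariant, for each $x$ the map $g\mapsto g\phi(x)$ preserves $\nu_G$, and $T$ preserves $\mu$.

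\textbf{Step 2: generic points of $\nu$ form a set saturated in the $G$-direction.} By Birkhoff's theorem and ergodicity of $\nu$, there is a $\nu$-full set $E\subset Y$ of points that are generic for $\nu$. That is, for every continuous $f:Y\to\R$ we have $\frac1n\sum_{k<n} f(F^k(x,g))\to\int f\,d\nu$. Here one uses separability of $C(Y)$ to get a single full-measure set for all $f$.

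The key observation is that $F$ commutes with left multiplication $L_h(x,g)=(x,hg)$. If $(x,g)\in E$, then
\[
\frac1n\sum_{k<n}f(F^k(x,hg))=\frac1n\sum_{k<n}(f\circ L_h)(F^k(x,g))\to\int f\circ L_h\,d\nu .
\]
So the question is whether $\nu$ is also left invariant. For compact $G$ we may take $\nu_G$ bi-invariant, since compact groups are unimodular; for the finite groups used here this is just counting measure. Then $(x,hg)\in E$ as well, so $E=\pi^{-1}(E_0)$ for some $E_0\subset X$. Fubini gives $\mu(E_0)=1$.

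\textbf{Step 3: uniqueness.} Let $\rho$ be any ergodic $F$-invariant probability measure. Its projection $\pi_*\rho$ is $T$-invariant, so by unique ergodicity $\pi_*\rho=\mu$. Hence $\rho(\pi^{-1}E_0)=\mu(E_0)=1$, so $\rho(E)=1$. By Birkhoff's theorem, $\rho$-a.e.\ point is generic for $\rho$. Such a point also lies in $E$, so it is generic for $\nu$, which forces $\rho=\nu$. Every invariant measure is a combination of ergodic ones, so $F$ is uniquely ergodic.

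\textbf{Step 4: minimality.} Now assume $T$ is minimal, so $\mu$ has full support. Then $\nu=\mu\otimes\nu_G$ has full support in $Y$. Let $Z\subset Y$ be a nonempty closed $F$-invariant set. By Krylov--Bogolyubov, $Z$ carries an invariant measure, which must equal $\nu$. Hence $Z\supset\supp\nu=Y$, and so $F$ is minimal.

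\textbf{Main obstacle.} The delicate step is the saturation in Step 2, namely that genericity is preserved along $G$-fibres. It needs left invariance of $\nu_G$ and that $L_h$ commutes with $F$, which holds because $\phi$ multiplies on the right. I will also need to handle continuity of $\phi$; it is implicitly continuous here and is needed for Birkhoff averages of continuous $f\circ F^k$ to make sense. Without continuity I would work with a.e.\ statements, but minimality in Step 4 genuinely requires continuity of $F$.
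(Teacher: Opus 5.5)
Your proposal is correct and follows essentially the same route as the paper. Both arguments saturate the $\nu$-generic set along $G$-fibres using the left translations that commute with $F$, use unique ergodicity of the base to put any other invariant measure onto that saturated set, and use full support of $\nu$ to get minimality. Your Step 3 is slightly cleaner than the paper's: you only use that every invariant $\rho$ projects to $\mu$, so that $\rho(E_0\times G)=1$. The paper instead also asserts fibre-saturation of the $\rho$-generic set, which is not needed and would require $\rho$ itself to be invariant under left translations. You also make explicit the bi-invariance of Haar measure that the paper's saturation step uses only implicitly.
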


\begin{proof}
Let $(x,g) \in Y$ be the $\mu \otimes \nu_G$-typical point, so it satisfies Birkhoff's Ergodic Theorem w.r.t.\ {\bf every} continuous function $\varphi:Y \to \R$.
For any $h \in G$, $\varphi_h$ defined by $\varphi_h(g) = \varphi(hg)$ is continuous too,
so $(x,hg)$ is $\nu$-typical w.r.t.\ $\varphi$ because $(x,g)$ is $\nu$-typical
w.r.t.\ $\varphi_h$.
It follows that there is a subset $W \subset X$ with $\mu(W) = 1$
such that $W \times G$ consists entirely of $\nu$-typical points.

If $\nu_{\gS'}$ was another ergodic $S$-invariant probability measure,
then the argument above gives a set $W' \subset X \setminus W$
with $\nu_{\gS'}(W) = 1$ such that
$W' \times G$ consist entirely of $\nu_{\gS'}$-typical points.
Then the projected measure $\mu'$ on $X$ defined by
$\mu'(A) = \nu_{\gS'}(A \times G)$ is $T$-invariant, and satisfies
$\mu'(W') = 1$.
But $W$ and $W'$ are disjoint, so $\mu \neq \mu'$, contradicting that $T$ is uniquely ergodic.

Now if $T$ is also minimal, then the support $\supp(\mu) = X$, and
$\supp(\nu) = \supp(\mu) \times \supp(\nu_G)
= X \times G = Y$. Combined with unique ergodicity of $\nu$,
this implies that $S$ cannot have closed invariant proper subsets, so it is minimal.
\end{proof}

\section{Linear recurrence of speedups}\label{sec:LR}

A major tool in the proof of Theorem~\ref{thm:main}
are return words.

\begin{definition}
Let $(X,\sigma)$ be a subshift and $w \in \cL(X)$.
 We call $R$ a {\em return word} of $w$ if
 $Rw \in \cL(X)$, begins and ends with $w$, but contains no other occurrences of $w$.
\end{definition}

 Linear recurrence implies that each return word $R$ has length $|R| \leq L|w|$, reoccurs within $L|R| \leq L^2|w|$ iterates, and there are at most
 $L (L+1)^2$ return words, see \cite{DHS99}. Let $\cR_w$ denote the collection of return words.
 Each $x \in X$ can be uniquely written as concatenation of return words
 \begin{equation}\label{eq:xR}
 x = (R_i)_{i \in \Z} = \dots R_{-2}R_{-1} R_0 R_1 R_2 \dots,
 \qquad R_i \in \cR_w.
 \end{equation}
 The collection of $x \in X$ written as concatenation of letters in the alphabet $\cR_w$
 gives the {\em derived shift} $(X_w,\sigma)$, which is thus a linearly recurrent shift in its own right.

 \begin{remark}\label{rem:c+s}
  An equivalent approach equivalent to return words is by means of cutting-and-stacking. The return words $R \in \cR_w$ are then stacks and the letters in $\cA$ that $R$ are composed of can be read
  off on the stack, bottom to top.
 \end{remark}

An outline of the proof of why linear recurrence of $(X,\sigma)$ implies linear recurrence of the speedup starts as follows.
Take $w$ so long that the return words contain the essential information
of the $S$-orbits that pass through it.
That is, the $c$ different $S$-orbits that $\orb(x)$ splits into, partitions the letters of $R \in \cR_w$ into $c$ subsets, characterized by so-called {\em entry positions}.
For each pair of concatenated return words $RR'$, the transit of the $S$-orbit, starting at the $j$-th entry position in $R$, uniquely selects an entry position in $R'$, say $j'$.
Doing this for all $j \in \{1, \dots, c\}$, we get a permutation $\psi_{RR'}: \{1, \dots, c\} \to \{1, \dots, c\}$.
This leads to a group extension, i.e., skew-product of $X_w \times \gS$  and $\gS$ is the group of permutations of $\{1, \dots, c\}$.
This is the first step of the proof, which we formalize in the following construction.
\\[3mm]
{\bf Construction of the group extension:}
 Let $(X,\sigma)$ be a two-sided shift, and
 let $S = \sigma^p$ be the speedup, where the jump-function $p:X \to \N$ is continuous.
 Since $X$ is compact,  there is $K \in \N$ so that $p$ is constant on cylinders with
 anticipation and history $K$ (i.e., centered $2K+1$-cylinders).
Assume that $w \in \cL(X)$ is so long that $|R| \geq p_{\max} + 2K+2$
for each $R \in \cR_w$, where $p_{\max}$ as in~\eqref{eq:pmax}).

Recall that there are $c$ different $S$-orbits passing through $w$.
 Inside $w$, we select $c$ indices  $K+1 \leq j \leq K+p_{\max}$, which will be referred to as the fixed {\em entry positions},
 If the entry position of an $S$-orbit is given, the whole $S$--orbit inside $w$, and a fortiori, inside
 each return word $R \in \cR_w$, is fixed.

 Furthermore, each concatenated pair $RR'$ of return words induces a permutation
 $$
 \psi_{RR'}:\{ 1 , \dots, c\} \to \{ 1 , \dots, c\}
 $$
 by setting $\psi_{RR'}(j) = j'$ if the $S$-orbit passing through the $j$-th entry position in the prefix word $w$ of $Rw$ also passes through the $j'$-th entry position of the prefix word $w$ in $R'w$.
 Let $\gS$ denote the collection of permutations of $\{ 1 , \dots, c\}$.
 Number the $S$-orbits through $x$ by $k = 1, \dots, c$, and for $s \in \gS$, assume that the $i$-th orbit contains the $j = s(k)$-th entry position in return word $R_0$, then it also contains
 entry position\footnote{The traidtion composition of permutations is left-to-right, to the bracket $( s \cdot \psi_{RR'})$ indicate that we first compute the composition of permutations, and only then apply it to $k$.} $j' = (s \cdot \psi_{RR'})(k)$ in return word $R_0$.
 Thus the entries of the distinct orbits are described by a skew-product
 $$
 F:X_w \times \gS \to X_w \times \gS, \qquad
 (\, (R_i)_{i \in \Z} , s\, ) \mapsto (\ \sigma((R_i)_{i \in \Z}),\, s \cdot \psi_{R_0R_1} \, ).
 $$

\begin{example}
 We illustrate the construction of the group extension by an example of the Fibonacci substitution shift.
The fixed point of the
Fibonacci substitution shift in the Introduction
has fixed point
 \begin{equation}\label{eq:fib}
 \rho = 0\ \underbrace{10\ 01\ 0}_{R'}\underbrace{10\ 0}_{R}\underbrace{1001\ 0}_{R'}\underbrace{10010}_{R'}\underbrace{10\ 0}_{R}\underbrace{10010}_{R'}\underbrace{100}_{R}\underbrace{1001\ }_{R'}\dots
 \end{equation}
 This is a one-sided sequence, and $\sigma^{-1}(\rho)$ has two preimages
 in $\overline{\orb_\sigma(\rho)}$, but this issue disappears in the two-sided version.

 Let $w = 1001$, then there are two return words $R = 100$ and $R' = 10010$, see \eqref{eq:fib}. Assume the jump-function is $p \equiv 2$.
 Then $|w|$ is too short to precisely implement the above (in fact, one return word is even shorter than $w$ itself), but we can nonetheless declare $1$ and $2$ as the entry positions.
 Since $|R|$ and $|R'|$ are both odd, $\psi_{RR'} = \psi_{R'R} = \psi_{R'R'} = (12)$ always flips these positions.

 If $p \equiv 3$, then $\psi_{RR'} = e$, the identity, because $|R|=3$,
 but $\psi_{R'R} = \psi_{R'R'} = (123)$.
\end{example}

To continue the preparation for the proof of the main theorem, we state Lemma~\ref{lem:lr}, which basically says that the skew-product we constructed is linearly recurrent. We state it for  an arbitrary linearly recurrent shift $(X,\sigma)$ and skew-function $\psi:X \to \gS$, but will later apply it to
$(X_w,\sigma)$ and $\psi_{R_0R_1}$.

\begin{lemma}\label{lem:lr}
For every linearly recurrent subshift $(X,\sigma)$ with constant $L$ and continuous skew-function $\psi:X \to \gS$, the skew-product $F:(x,s) \mapsto (\sigma(x), \psi(x)) \cdot s$
is also linearly recurrent:
That is: there is $L^*$ depending only on $L$ such that for every $u \in \cL(X)$, $s \in \gS$
and $x \in [u]$, there is $1 \leq n \leq L^*|u|$ such that
  $F^n(x,s) \in [u] \times \{s\})$.
\end{lemma}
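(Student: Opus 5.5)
The plan is to reduce the statement to a problem about the derived shift at scale $|u|$, where the cocycle becomes a function of a single letter, and then to exploit the fact that the data of that reduced problem (alphabet size, return-word lengths relative to $|u|$, the group $\gS$) are bounded independently of $u$. First, the short words. Let $K$ be such that $\psi$ is constant on centered $(2K+1)$-cylinders. There are only finitely many $u$ with $|u|\le 2K+1+p_{\max}$. For these, minimality of the orbit closure of $(x,s)$, combined with compactness, gives a uniform return time, which can be absorbed into $L^*$. I therefore expect $L^*$ to depend on $L$, on $c$ (through $\#\gS$) and on $K$, not on $L$ alone.

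Now take $|u|>2K+1$ and $x\in[u]$. Write $x$ as a concatenation of return words $(R_i)_{i\in\Z}\in X_u$ as in \eqref{eq:xR}, with $R_0$ starting at position $0$. Each $R\in\cR_u$ is followed by $u$, and $u$ also precedes it. Hence the cocycle value $g_R:=\psi^{|R|}$ over one return word is determined by $R$ alone, because the $K$-windows at both ends lie inside the flanking copies of $u$. The return times of $F$ to $[u]\times\gS$ are thus governed by the induced skew product $(R_i)_i\mapsto$ shift, $s\mapsto s\cdot g_{R_0}$, on $X_u\times\gS$. This induced system has the following properties:
\begin{itemize}
\item its skew function depends on one coordinate only;
\item the base alphabet has at most $L(L+1)^2$ letters;
\item every letter has length at most $L|u|$;
\item the derived shift is linearly recurrent with constant depending only on $L$, by \cite{Dur00,DHS99}.
\end{itemize}
It therefore suffices to find $m\le C(L,c)$ with $g_{R_0}\cdots g_{R_{m-1}}=e$. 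The time $n=|R_0\cdots R_{m-1}|\le mL|u|$ then satisfies $\sigma^n x\in[u]$ and returns the $\gS$-coordinate to $s$.

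The main step, and the one I expect to be the obstacle, is bounding $m$ uniformly. Pigeonhole on the prefix products $P_j=g_{R_0}\cdots g_{R_{j-1}}$, $0\le j\le\#\gS$, only yields $P_a=P_b$, that is, a trivial product starting at $R_a$ rather than at $R_0$. My first attempt would use the subgroup theory of Section~\ref{sec:skew}. By Proposition~\ref{prop:cont} and Corollary~\ref{cor:Gxs_conj}, the local essential values $G_y$ of the induced skew product are conjugate to a fixed subgroup $H$, and there is a cylinder neighbourhood $U$ on which all returns have cocycle in $H$. Proposition~\ref{prop:Furst}, together with Lemma~\ref{lem:ergod}, then shows that the restricted extension with fiber $H$ is minimal.

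To make this quantitative, I would apply linear recurrence of $X_u$ to the word $V=R_0R_1\cdots R_{M-1}$ with $M=\#\gS$. Its occurrences recur with gaps at most $L'M$ letters. Among $M+1$ consecutive occurrences, two carry the same prefix product. Conjugating back to the start by the word $R_0\cdots R_{a-1}$, which is common to both occurrences because $V$ is repeated, should convert ``trivial product from $R_a$'' into ``trivial product from $R_0$''. Making this conjugation step precise is where I expect the difficulty to lie. If it fails, I would instead argue by compactness over the finitely many possible combinatorial types of skew functions on alphabets of size at most $L(L+1)^2$, with the recurrence bound coming from minimality of each orbit closure, which holds because finite group extensions of minimal systems are distal.
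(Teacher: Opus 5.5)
\textbf{The gap.} The missing piece is the step you flag yourself: a bound $m\le C(L,c)$, \emph{uniform in $u$}, with $g_{R_0}\cdots g_{R_{m-1}}=e$. Your first route does not give it. Pigeonhole over occurrences $0=a_0<a_1<\dots<a_M$ of $V$ only yields $P_{a_i}=P_{a_j}$ for some $i<j$, i.e.\ a trivial product over $R_{a_i}\cdots R_{a_j-1}$. In general this block is not contained in the copy of $V$ at $a_i$, so it need not occur at position $0$. Nothing forces any of $P_{a_1},\dots,P_{a_M}$ to equal $e$; they may all lie in one nontrivial coset.

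Your fallback, as stated, relies on there being ``finitely many combinatorial types''. Only the skew functions on an alphabet of at most $L(L+1)^2$ letters are finitely many. For a general (non-substitutive) linearly recurrent shift, the derived shifts $X_u$ form an infinite family. So minimality of each orbit closure bounds $m$ for each $u$ separately, with no uniformity in $u$, and that uniformity is the whole content of the lemma.

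\textbf{How to close it along your lines.} You need compactness over the base shifts as well. Label the alphabets by $\{1,\dots,N\}$. The subshifts with linear-recurrence constant $\le L'$ then form a closed, hence compact, family in the Hausdorff topology: for each $n$ the condition involves only $\cL_n$ and $\cL_{(L'+1)n}$, and these stabilise along convergent sequences. Every limit $Z$ is therefore again linearly recurrent, hence minimal. Its finite group extension is a union of minimal sets, because fibre multiplication on the side opposite to the cocycle commutes with $F$. If $m$ were unbounded, points $z^{(k)}\in Z_k$ whose products avoid $e$ for $k$ steps would accumulate on some $z\in Z$ with $(z,e)$ never returning to $Z\times\{e\}$, a contradiction. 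This gives a non-effective $C(L,c)$.

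\textbf{A smaller slip.} The cocycle over $R_0$ is not determined by $R_0$ alone. The $\psi$-windows at its first $K$ positions reach into $R_{-1}$, which is not a copy of $u$. Either let $g$ depend on $R_{-1}R_0$ (as the paper's $\psi_{R_0R_1}$ does) or extend $u$ by $K$ symbols to the left.

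\textbf{Comparison with the paper.} The paper avoids uniformity across scales altogether. It fixes one cylinder $[u]$ on which the local essential-value group is a fixed $\gS'$. It gets minimality and unique ergodicity of the induced extension on $X_u\times\gS'$ from Lemma~\ref{lem:ergod} and Proposition~\ref{prop:Furst}. It then observes that cylinders there have measure $\mu_u([v])/\#\gS'\ge 1/(L|v|\#\gS')$. A Boshernitzan counting argument then bounds returns of every $[v]\times\{s\}$ by $e^{L\#\gS'}|v|$ at once, with an explicit constant. Your remark that $L^*$ cannot depend on $L$ alone is correct: the paper's own constant $e^{Lc!}L|u|$ involves that fixed $u$, hence $\psi$.
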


\begin{example}
 To give an idea of the difficulty of this lemma, consider a  subshift
 $(X,\sigma)$ on the alphabet $\{ A,B,C,D\}$ with transition graph as in Figure~\ref{fig:annoying_example}. The subsitutions at the arrows indicate the skew-function $\psi: X \to \gS$ for the skew-product $F(x,s) \mapsto (\sigma(x), s \cdot \psi_{x_0x_1})$. The subshift is not a subshift of finite type, so the transition graph only indicates which transitions between letters are possible, but which transitions are really allowed depends on other conditions, too.
 Start in the bottom-left vertex $A$ with initial permutation $s_0$.
 Then for every path returning to $(A,s_0)$ will need to pass by $C$ and $D$ an even number of times. This is something that cannot be derived from the properties of the two-letter words in $\cL(X)$ only. That is, also if $(X,\sigma)$ is linearly recurrent, it is not obvious that $(X \times \gS, F)$ is linearly recurrent, too.
 \end{example}

\begin{figure}[ht]
\begin{center}
\begin{tikzpicture}[scale=1.5]
\draw (0,0) circle (0.4cm); \node at (0,0) {A};
\draw (3,0) circle (0.4cm); \node at (3,0) {B};
\draw (6,0) circle (0.4cm); \node at (6,0) {C};
\draw (3,2) circle (0.4cm); \node at (3,2) {D};
\draw[->] (0.5,0.2) .. controls (1.5,0.5) .. (2.5,0.2); \node at (1.5,0.7) {(12)(34)};
\draw[<-] (0.5,-0.2) .. controls (1.5,-0.5) .. (2.5,-0.2);  \node at (1.5,-0.7) {(12)(34)};
\draw[->] (3.5,0.2) .. controls (4.5,0.5) .. (5.5,0.2);  \node at (4.5,0.7) {(12)};
\draw[<-] (3.5,-0.2) .. controls (4.5,-0.5) .. (5.5,-0.2);  \node at (4.5,-0.7) {id};
\draw[->] (3,0.5) -- (3,1.5);  \node at (3.4,1) {(34)};
\draw[->] (2.5,1.8) .. controls (0.5,1.2) .. (0.2,0.5);  \node at (1,1.6) {(12)};
\end{tikzpicture}
\caption{Transition graph of a non-SFT subshift with substitutions in $\gS$ at the edges. Extra restriction beyonds the transition graph exist.}
\label{fig:annoying_example}
\end{center}
\end{figure}

Before proving Lemma~\ref{lem:lr} formally, we give an outline of the proof.
First, we study the shapes of the transitive
components of $X \times \gS$. To find such component, choose $(x,s) \in X \times \gS$ and take its orbit closure, say $Y$. By minimality of $(X, \sigma)$, $Y$ projects to the whole of $X$ in the base direction, but in the fiber direction, for each $x \in X$, $Y$ projects to $\{ x \} \times s_x \gS_x$, where $\gS_x$ is a subgroup of $\gS$, the analog of $G_x$ in \eqref{eq:Gx}
and $s_x \in \gS$.
The assignment $x \mapsto \gS_x$ (and therefore $x \mapsto s_x$) is piecewise constant by Proposition~\ref{prop:cont}.
We can take $u \in \cL(X)$ so long that $[u] \subset U$ from the proof of Proposition~\ref{prop:cont}.

Therefore $s_x \equiv e$ and $\gS_x \equiv \gS'$ on $[u]$ for some subgroup $\gS'$ of $\gS$.
Replacing $(X,\sigma)$ by the first return map to $[u]$
(which is still uniquely ergodic, preserving a measure $\mu_u$),
we can assure that the skew-product on $(X_u, \gS')$ (that is, the shift with return words $\cR_u$ as alphabet) is transitive, and ergodic by Lemma~\ref{lem:ergod}.
Now Proposition~\ref{prop:Furst} implies that
this skew-product is also minimal, so for each $v \in \cL(X_u)$
and $s_* \in \gS'$, the set
$[v] \times \{ s_* \}$ reoccurs with a finite gap and has $\mu_u \times \nu_{\gS'}$-measure $\mu_u([v])/\#\gS' \geq 1/(L\, |v| \#\gS')$. Now a counting argument, going back to Boshernitzan, see \cite{BBL13} and \cite[Lemma 6.30]{Bruin23}, implies that
$[v] \times \{ s_0 \}$ reoccurs within $e^{L \#\gS'}|v|$ iterates.
\\[3mm]
\begin{proofof}{Lemma~\ref{lem:lr}}
Combining  Proposition~\ref{prop:cont} with Corollary~\ref{cor:Gxs_conj},
we see that every transitive $F$-invariant subset of $X_w \times \gS$
has the form $Y = \bigcup_{x \in X_w} \{ x \} \times s_x \gS_x$.

From Proposition~\ref{prop:cont},  we can take a cylinder set $[u]$ on which $s_x \equiv e$ and $\gS_x \equiv \gS'$ is a subgroup of $\gS$.
Therefore the first return map to $[u]$ is a subshift
$(X_u, \sigma)$ over the alphabet consisting of return words $\cR_u$.
It is linearly recurrent and has a transitive group extension
$$
F: X_u \times \gS' \to  X_{w} \times \gS',
\qquad (x, s) \mapsto (\sigma(x), s \cdot \psi(x))
$$.
Without loss of generality,m we can assume that $u$ is so long that the correspoding skew-function $\psi$
depends only on the first two coordinnates of $x$.

Let $\mu_u$ be the unique shift-invariant measure of $(X_u,\sigma)$ and $\nu_{\gS'}$ counting measure on $\gS'$.
By Lemma~\ref{lem:ergod}, $\mu_u \times \nu_{\gS'}|_{\gS'}$ is ergodic, and then
by Furstenberg's Proposition~\ref{prop:Furst},
$F: X_u \times \gS' \to X_u \times \gS'$ is also uniquely ergodic, and since the base map $\sigma:X_u \to X_u$ is minimal, $F$ is minimal as well.

View $F:X_u \times \gS' \to X_u \times \gS'$
as a subshift on the alphabet $\cR_u \times \gS'$.
For a word $x \in \cR_w^*$ and $s \in \gS'$, there is a unique word $\tilde x = \tilde x(x,s)$ in the alphabet $\cR_u \times \gS'$, such
$$
\tilde x_1 = (x_1,s)
\quad \text{ and } \quad
\tilde x_k = (\, x_k, s \cdot \psi(x_0x_1) \cdots \psi(x_{k-1}x_k ) \, )\text{ for }
2 \leq k \leq |x|.
$$
That is, $s$ together with $x_1 \dots x_k$ uniquely determine $\tilde x_k$.
Therefore
\begin{equation}\label{eq:us_0}
 \mu_u \times \nu_{\gS'}([v] \times \{ s \}) \geq \frac{1}{L |v| \# \gS'}
 \quad \text{ for the cylinder sets  }[v] \text{ in } X_u \text{ and } s \in \gS'.
\end{equation}

Now choose $v \in v_1\dots v_n$ in $\cL(X_{w})$ and let $N = N(v)$ denote
the length of the longest word $x = v_1 \dots v_n x_{n+1} \dots x_N$,
such that $F^{N}( [\tilde x(x,e)] ) \subset [v] \times \{ e \}$
and $F^k([\tilde x(x,e)] ) \not\subset [v] \times \{ e \}$
for $1 \leq k < N$. That is, the word $\tilde v = \tilde v(u,e)$ is prefix of $\tilde x$, but doesn't occur elsewhere in $\tilde x$.
Note that $N(v) < \infty$ by minimality of $F$.

Let $\tilde v_k = \tilde x_1 \dots \tilde x_k$.
Then~\eqref{eq:us_0} gives
$$
\mu_u \times \nu_{\gS'}([\tilde v_k]) = \frac{1}{\#\gS'} \mu_u([x_1 \dots x_k])
\geq \frac{1}{\#\gS'} \frac{1}{L \, k}.
$$
Because of the single occurrence of $\tilde v$ inside $\tilde x$, the cylinder sets
$\sigma^{N-k+1}([\tilde v_k])$ are pairwise disjoint.
Therefore
\begin{eqnarray*}
 1 &=& \mu_u \times \nu_{\gS'}(X_u \times \gS')
\geq \sum_{k=|v|}^{N(v)}  \mu_u \times \nu_{\gS'}(\sigma^{N-k+1}([\tilde v_k]) )  \\
&\geq& \sum_{k=|v|}^{N(v)} \frac{1}{L \#\gS'}\frac{1}{k} \geq
\frac{1}{L \# \gS'}\ \left( \log N(v) - \log|v|\right).
\end{eqnarray*}
Hence $N(v) \leq e^L\, |v|$, independently of $u$ and $s_* \in \gS'$.
Assuming the worst case that $\gS'$ is the entire permutation group, so $\#\gS = c!$,
and noting that $|R| \leq L |u|$ for each $R \in \cR_u$, we obtain the lemma with $L^* = e^{L c!} L|u|$.
\end{proofof}

\begin{remark}\label{rem:lr}
 The above proof uses no specific properties of the permutation group, except its finiteness.
 Therefore, Lemma~\ref{lem:lr} proves the more general statement:
 if $G$ is a finite group and $(X,\sigma)$ a linearly recurrent subshift, then the group extension $F:(x,g) \mapsto (\sigma(x), g \cdot \psi(x))$ is linearly recurrent, provided the skew-function is continuous.
\end{remark}

\begin{proofof}{Theorem~\ref{thm:main}}
Let $(X,\sigma)$ be a linearly recurrent subshift and $S$ a homeomorphic speedup.
The speedup $S$ is a shift over the alphabet
$\cB = \{b = x_0 \dots x_{p(x)-1}\}$ as in \eqref{eq:cB} and each allowed word in $\cB^*$ can be recoded into the $x \in X$ it came from by means of the substitution
$$
\chi: \cB \to \cA^*, \qquad b \mapsto  x_0 \dots x_{p(x)-1},
$$
with the usual extension $\chi:\cB^* \to \cA^*$ by concatenation.

To prove the first implication in Theorem~\ref{thm:main},
let $v \in \cB^*$ be some word for $S$.
Let $w$ be a sufficiently long word in $\cL(X)$ such that
$w_{K+1} \dots w_{|w|-K}$ contains $\chi(v)$, where $K$ comes from the construction of the group extension below Remark~\ref{rem:c+s}.
By linear recurrence, $w$ returns within $L |w|$ iterates, and by Lemma~\ref{lem:lr}, the combination $(w,s)$ reoccurs within $L^* |w|$ iterates.
But the same $s \in \gS$ means that the $S$-orbits also return in the same order, so the $S$-orbit starting with $v$ sees $v$ again.
Hence, $S$-pattern $v$ returns within
$L^*  |w| \leq 2 L^* p_{\max} |v|$
iterates of $S$.

Now for the reverse implication, assume that $(X,S)$ is linearly recurrent with constant $L$.
If $w \in \cL(X)$ is an allowed word for $(X,\sigma)$, then there is some minimal allowed word $v \in \cB^*$
such that $w$ is a subword of $\chi(v)$.
We have
$$
|v| \leq |w| \leq \chi(v) \leq p_{\max} |v|.
$$
Since $v$ returns with gap $\leq L|v|$,
$w$ returns with gap $\leq L p_{\max} |w|$.
\end{proofof}

{\bf Acknowledgements}
The author was supported by the ANR-FWF Project I 6750-N.
He also thanks Klaus Schmidt for useful discussions on non-abelian group extensions, and Jiao Changhua for communicating an issue with an earlier version of Lemma~\ref{lem:ergod}.

\end{document}